\tikzstyle arrowstyle=[scale=1]
\def\@settitle{\begin{center}%
		\bfseries\Large
		\@title
	\end{center}%
}
\patchcmd{\@setauthors}{\MakeUppercase}{\normalsize}{}{}
\theoremstyle{plain}
\newtheorem{theorem}{Theorem}[section]		
\newtheorem{lemma}[theorem]{Lemma}
\newtheorem{claim}[theorem]{Claim}
\theoremstyle{remark}
\newcommand{\beq}[1]{\begin{equation}\label{#1}}
	\newcommand{\enq}[0]{\end{equation}}
\def\Prob{\mathbb{P}}
\newcommand{\sd}[2]{#1\,\triangle\, #2}
\title{Antichain Codes}
\author{Benjamin Gunby}
\address{Department of Mathematics, Rutgers University, Piscataway, NJ 08854, USA}
\email{bg570@connect.rutgers.edu}
\author{Xiaoyu He}
\address{Department of Mathematics, Princeton University, Princeton, NJ 08540, USA}
\email{xiaoyuh@princeton.edu}
\author{Bhargav Narayanan}
\address{Department of Mathematics, Rutgers University, Piscataway, NJ 08854, USA}
\email{narayanan@math.rutgers.edu}
\author{Sam Spiro}
\address{Department of Mathematics, Rutgers University, Piscataway, NJ 08854, USA}
\email{sas703@scarletmail.rutgers.edu}
\begin{document}
	\maketitle
	\begin{abstract}
		A family of sets $A$ is said to be an antichain if $x\not\subset y$ for all distinct $x,y\in A$, and it is said to be a distance-$r$ code if every pair of distinct elements of $A$ has Hamming distance at least $r$. Here, we prove that if $A\subset 2^{[n]}$ is both an antichain and a distance-$(2r+1)$ code, then $|A| = O_r(2^n n^{-r-1/2})$. This result, which is best-possible up to the implied constant, is a purely combinatorial strengthening of a number of results in Littlewood--Offord theory; for example, our result gives a short combinatorial proof of H\'alasz's theorem, while all previously known proofs of this result are Fourier-analytic. 
	\end{abstract}
	\section{Introduction}
	In this paper, motivated by considerations from Littlewood--Offord theory, we study the intersection of two classical combinatorial problems in the hypercube, namely that of finding large antichains and that of finding large distance-$r$ codes. 
	
	A family of sets $A\subset 2^{[n]}$ is an \emph{antichain} if $x\not\subset y$ for any distinct $x,y\in A$. For example, the layer 
	\[\binom{[n]}{k}=\{x\subset [n]:|x|=k\}\] 
	is an antichain for all $0 \le k \le n$, and it is a classical result of Sperner~\cite{sperner1928satz} that every antichain in the hypercube $2^{[n]}$ has size at most $\binom{n}{\lfloor n/2\rfloor}$. There are a huge number of strengthenings and variants of Sperner's theorem; we refer the reader to~\cite{engel1997sperner} for more background. 
	
	A family of vectors $B \subset \{0,1\}^n$ is called a \textit{distance-$r$ code} if the Hamming distance between any pair of vectors in $B$ is at least $r$; identifying $\{0,1\}^n$ and $2^{[n]}$ in the natural way, we call a family $A\subset 2^{[n]}$ a \textit{distance-$r$ code} if the symmetric difference $x\,\triangle\, y$ of any two distinct $x,y\in A$ always has size at least $r$.  One of the central problems of coding theory is to find large distance-$r$ codes with various desirable properties, and the existence of such codes has many applications in both pure and applied problems. We refer the reader to~\cite{guruswami2012essential} for more on coding theory, and mention only the basic fact (as evidenced by BCH codes) that the largest possible distance-$r$ codes in $2^{[n]}$ have size $\Theta(2^n n^{-\lfloor(r-1)/2\rfloor})$.
	
	
	Here, we aim to answer the following natural question: how large can the cardinality of an \emph{antichain code} in $2^{[n]}$ be? After some thought, one finds that it is difficult to do much better than taking the intersection of a large code and a large antichain; our main result shows that such constructions are indeed optimal.
	\begin{theorem}\label{thm:main}
		For any fixed $r\in \mathbb{N}$, if $A\subset 2^{[n]}$ is both an antichain and a distance-$(2r+1)$ code, then
		\[|A| = O\left(2^n n^{-r-1/2}\right).\]
	\end{theorem}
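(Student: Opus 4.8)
The plan is to play the two hypotheses against each other through \emph{shadows}: the distance condition will force the shadows of $A$ to have their maximum possible size, while the antichain condition will force these shadows to be pairwise disjoint; a counting argument against a symmetric chain decomposition (equivalently, a weighted LYM inequality) then finishes the job. Throughout I write $A_k = A\cap\binom{[n]}{k}$.

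First I would discard the part of $A$ far from the middle. Since $|A_k|\le\binom nk$ and $\sum_{|k-\lfloor n/2\rfloor|>t}\binom nk\le 2^{n+1}e^{-2t^2/n}$, taking $t=C_r\sqrt{n\log n}$ shows $A$ has at most $O_r(2^n n^{-r-1/2})$ elements in layers $k$ with $|k-\lfloor n/2\rfloor|>t$; discarding these, I may assume $A$ is supported on a window $W$ of $O_r(\sqrt{n\log n})$ layers around $n/2$, on which $\binom nk=(1+o(1))\binom n{\lfloor n/2\rfloor}$.

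Now for $0\le s\le r$ define the rank-$s$ shadows $\partial^s_\uparrow A=\{x\cup I: x\in A,\ I\cap x=\emptyset,\ |I|=s\}$ and $\partial^s_\downarrow A=\{x\setminus J: x\in A,\ J\subseteq x,\ |J|=s\}$. Because $A$ is a distance-$(2r+1)$ code, the maps $(x,I)\mapsto x\cup I$ and $(x,J)\mapsto x\setminus J$ are injective for $s\le r$: if $x\cup I=x'\cup I'$ with $|I|=|I'|=s$ then $x\triangle x'\subseteq I\cup I'$ forces $|x\triangle x'|\le 2s\le 2r<2r+1$, a contradiction. Hence $|\partial^r_\uparrow A|=\sum_{x\in A}\binom{n-|x|}{r}$, $|\partial^r_\downarrow A|=\sum_{x\in A}\binom{|x|}{r}$, both of size $\Theta_r(n^r|A|)$ on $W$. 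The antichain condition makes $A$, $\partial^r_\uparrow A$, $\partial^r_\downarrow A$ pairwise disjoint: $\partial^r_\uparrow A$ consists of strict supersets of elements of $A$ and $\partial^r_\downarrow A$ of strict subsets, so each is disjoint from $A$; and if $x\cup I=x'\setminus J$ with $x,x'\in A$ and $|I|=|J|=r>0$ then $x\subseteq x'$ with $x\neq x'$, contradicting the antichain property. More generally, the radius-$r$ Hamming balls $B_r(x)$, $x\in A$, are pairwise disjoint (that is exactly the distance condition), so $\Phi:=\bigsqcup_{x\in A}B_r(x)$ is a genuine disjoint union, with $|\Phi|=|A|\sum_{d=0}^r\binom nd=\Theta_r(n^r)\cdot|A|$. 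Thus the theorem is \emph{equivalent} to the statement that $\Phi$ is small, namely $|\Phi|=O_r\!\big(\binom n{\lfloor n/2\rfloor}\big)$ — i.e. that these disjoint balls, though free to spread over $|W|\sim\sqrt{n\log n}$ layers, in fact occupy only $O_r(1)$ layers' worth of the cube.

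Proving $|\Phi|=O_r\!\big(\binom n{\lfloor n/2\rfloor}\big)$ is the crux, and I expect it to be the main obstacle, precisely because neither hypothesis alone suffices: disjointness of the balls gives only the sphere-packing bound $|\Phi|\le 2^n$ (off by a factor $\sqrt n$), while the antichain bound on $A$ on its own is just Sperner's theorem. The right formulation is a weighted LYM inequality: for $x\in A$ set $\lambda(x)=\sum_{y\in B_r(x)}\binom n{|y|}^{-1}$, the expected number of points of $B_r(x)$ on a uniformly random maximal chain; on $W$ one has $\lambda(x)=\Theta_r\!\big(n^r\binom n{\lfloor n/2\rfloor}^{-1}\big)$, so it suffices to prove $\sum_{x\in A}\lambda(x)=O_r(1)$, i.e. that a random maximal chain meets only $O_r(1)$ of the balls $B_r(x)$ in expectation. (Note a single chain can meet $\sqrt n$ of them for a ``staircase'' $A$ such as $x_s=\{1,\dots,m_s\}\setminus\{s\}$ with $m_s\approx M+(2r-1)s$, but such $A$ is tiny; the point is that a large, spread-out $A$ cannot do this.) A natural way to attack this is induction on $r$: a gain of a factor $\Theta(n)$ per unit increase of $r$ matches the telescoping product $\prod_k\frac{2k-(2r+1)}{k}\asymp n^{-r-1/2}$, so one wants a recursion $M(n,r)\le\frac{2n-(2r+1)}{n}M(n-1,r)$ obtained by contracting a coordinate $i$ into the two halves $A_0,A_1$ (sets avoiding, resp. containing $i$): each half is again an antichain distance-$(2r+1)$ code in $2^{[n-1]}$, but their union is forced to be a distance-$2r$ code and hence ``almost'' an antichain, so they cannot both be extremal. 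Quantifying this deficiency — and, equivalently, controlling what is lost when one trades distance $2r$ back for the antichain property — is where the real work lies.
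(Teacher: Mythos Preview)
Your setup is clean and your reformulation is correct: the disjointness of the Hamming balls $B_r(x)$ follows from the distance condition, the weighted-LYM quantity $\sum_{x\in A}\lambda(x)$ is indeed $\Theta_r\big(|A|\,n^r\binom{n}{\lfloor n/2\rfloor}^{-1}\big)$ on the central window, and bounding this sum by $O_r(1)$ is exactly equivalent to the theorem. But that is the point at which your argument stops being a proof and becomes a restatement. You say yourself that this is ``the crux'' and that ``the real work lies'' in the final step, and you do not carry out that step.

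The specific mechanism you propose for the crux --- coordinate deletion with a recursion $M(n,r)\le\frac{2n-(2r+1)}{n}M(n-1,r)$ --- does not go through as stated. When you split $A$ across a coordinate $i$ into $A_0,A_1\subset 2^{[n-1]}$, each half is, as you note, still a distance-$(2r+1)$ antichain code, which only yields $M(n,r)\le 2M(n-1,r)$. The observation that $A_0\cup A_1$ is a distance-$2r$ code in $2^{[n-1]}$ is correct, but a distance-$2r$ code need not be close to an antichain in any useful sense, and you do not say what inequality on $|A_0|,|A_1|$ would produce the factor $\frac{2n-(2r+1)}{n}$ rather than $2$. There is no candidate lemma here, only a target answer; the antichain hypothesis is not actually used anywhere after the first paragraph except in one disjointness remark.

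For comparison, the paper does not attempt a one-coordinate recursion at all. The engine is a shadow-expansion lemma operating across $3r$ consecutive layers: if $S\subset\binom{[n]}{k}$ and $A\subset\binom{[n]}{k-r}\setminus\partial^r S$ is a distance-$(2r+1)$ code (with $k$ in the range $[n/2+3r,3n/4]$), then $|\partial^{3r}S\cup\partial^{2r}A|\ge|S|+c_r n^r|A|$. This is proved by a probabilistic counting argument that tracks how many boundary pairs in $\partial^{2r}S$ are created by $A$. Iterating this lemma down the layers, with $S_k$ the down-closure of $A$ in layer $k$, gives $|S_k|\ge c_r n^r\sum_\ell|A_\ell|$ for some $k\approx n/2$, and then $|S_k|\le\binom{n}{k}$ finishes. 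The antichain hypothesis enters precisely as the condition $A_{k-r}\cap\partial^r S_k=\emptyset$, which is what allows the shadow of $A$ to contribute new mass at every step; your random-chain picture does not isolate an analogous quantitative role for it.
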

	This result is best-possible up to multiplicative constants. Indeed, as mentioned above, we know that there exists a distance-$(2r+1)$ code $A\subset 2^{[n]}$ with $|A|=\Theta(2^nn^{-r})$.  By simple averaging, it is easy to  show that there exists some $x\subset[n]$ for which $A \,\triangle\, x=\{a\,\triangle\, x:a\in A\}$, which is also a distance-$(2r+1)$ code, intersects the middle layer $\binom{[n]}{\lfloor n/2\rfloor}$ in at least $\Theta(n^{-r} \binom{n}{\lfloor n/2\rfloor})=\Theta(2^n n^{-r-1/2})$ sets; then $( A\,\triangle\, x)\cap \binom{[n]}{\lfloor n/2\rfloor}$ gives us an antichain code of the desired size.

	
	As mentioned earlier, the primary motivation for Theorem~\ref{thm:main} comes from the Littlewood--Offord theory of anticoncentration. In particular, Theorem~\ref{thm:main} may be viewed as a purely combinatorial abstraction of an important result of Hal\'asz~\cite{halasz1977estimates} that is widely used in the study of random matrices; see~\cite{ferber2021counting,kahn1995probability} and the many references therein. To explain this connection, we need to fill in some background, a task to which we now turn.
	
	Recall that the Littlewood--Offord problem asks the following: given a vector $\mathbf{a}=(a_1,\ldots,a_n)$ of non-zero real numbers, estimate 
	\[\rho(\mathbf{a})=\max_{\alpha\in \mathbb{R}} \Prob[\epsilon_1 a_1+\cdots +\epsilon_n a_n = \alpha],\]
	where the $\epsilon_i$'s are independent Bernoulli random variables with $\Prob[\epsilon_i=0]=\Prob[\epsilon_i=1]$. 
	In their study of random polynomials, Littlewood and Offord~\cite{LO} showed that $\rho(\mathbf{a})= O(n^{-1/2} \log n)$ for any such $\mathbf{a}$, and soon after, Erd\H{o}s~\cite{erdos1945lemma} used Sperner’s theorem to give a simple combinatorial proof of the sharp estimate $\rho(\mathbf{a}) \le 2^{-n}\binom{n}{n/2} = O(n^{-1/2})$.
	
	There has since been considerable interest in establishing better bounds on $\rho(\mathbf{a})$ under stronger assumptions on the arithmetic structure of $\mathbf{a}$. For example, Erd\H{o}s and Moser~\cite{erdos1947e736} proved that $\rho(\mathbf{a})=O(n^{-3/2}\log n)$ whenever all of the entries of $\mathbf{a}$ are distinct. S\'ark\"ozy and Szemer\'edi~\cite{sarkozy1965problem} improved this to $\rho(\mathbf{a})=O(n^{-3/2})$, which is asymptotically best possible, and subsequently, Stanley~\cite{stanley1980weyl} used algebraic arguments to establish sharp bounds for this problem. Of particular interest to us is a far-reaching generalisation of the S\'ark\"ozy--Szemer\'edi theorem due to Hal\'asz~\cite{halasz1977estimates}, one formulation of which is as follows.
	
	\begin{theorem}\label{thm:Halazs}
		Let $\mathbf{a}=(a_1,\ldots,a_n)$ be a vector of real numbers such that for any disjoint subsets $x,y\subset [n]$ with $|x|+|y|\le 2r$, we have
		\[\sum_{i\in x} a_i\ne \sum_{j\in y} a_j.\]
		Then
		\[\rho(\mathbf{a})=O_r\left(n^{-r-1/2}\right).\]
	\end{theorem}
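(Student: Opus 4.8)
The plan is to derive Theorem~\ref{thm:Halazs} as an essentially immediate consequence of Theorem~\ref{thm:main}, via the standard dictionary between anticoncentration and subset sums. For $S\subseteq[n]$ write $f(S)=\sum_{i\in S}a_i$. Since the $\epsilon_i$ are independent fair coins, the indicator set $\{i:\epsilon_i=1\}$ is a uniformly random element of $2^{[n]}$, so $\Prob[\sum_i\epsilon_ia_i=\alpha]=2^{-n}|A_\alpha|$, where $A_\alpha=\{S\subseteq[n]:f(S)=\alpha\}$. Hence $\rho(\mathbf{a})=2^{-n}\max_\alpha|A_\alpha|$, and it suffices to prove $|A_\alpha|=O_r(2^nn^{-r-1/2})$ for every $\alpha$. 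I will do this by checking that each level set $A_\alpha$ is simultaneously a distance-$(2r+1)$ code and, after a harmless normalisation, an antichain, and then invoking Theorem~\ref{thm:main}.

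First, the code condition. If $S,T\in A_\alpha$ are distinct, put $x=S\setminus T$ and $y=T\setminus S$; these are disjoint and not both empty, and subtracting the identities $f(S)=f(S\cap T)+\sum_{i\in x}a_i$ and $f(T)=f(S\cap T)+\sum_{j\in y}a_j$ yields $\sum_{i\in x}a_i=\sum_{j\in y}a_j$. Were $|S\triangle T|=|x|+|y|\le 2r$, this would violate the hypothesis on $\mathbf{a}$; therefore $|S\triangle T|\ge 2r+1$, i.e.\ $A_\alpha$ is a distance-$(2r+1)$ code.

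Next, the antichain condition, for which I would first reduce to the case $a_i>0$ for all $i$. Flipping the sign of a single entry $a_k$ corresponds to replacing $\epsilon_k$ by $1-\epsilon_k$, which shifts $\sum_i\epsilon_ia_i$ by the constant $a_k$ without otherwise changing its distribution, so it leaves $\rho(\mathbf{a})$ invariant; it also only permutes the list of forbidden equalities $\sum_{i\in x}a_i=\sum_{j\in y}a_j$ over disjoint pairs $(x,y)$, since each such equality for the flipped vector is one for the original vector after moving $k$ between $x$ and $y$. Thus the hypothesis of the theorem is preserved, and after performing these flips we may assume every $a_i>0$. Then $S\subsetneq T$ forces $f(S)<f(T)$, so no two distinct members of $A_\alpha$ are nested and $A_\alpha$ is an antichain. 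Now Theorem~\ref{thm:main} applies to $A_\alpha$ and gives $|A_\alpha|=O_r(2^nn^{-r-1/2})$, hence $\rho(\mathbf{a})=2^{-n}\max_\alpha|A_\alpha|=O_r(n^{-r-1/2})$, as desired. (When $n$ is bounded in terms of $r$ the claimed bound is vacuous, since $\rho\le 1$, so there is no small-$n$ subtlety.)

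The deduction itself is routine; all of the difficulty is concentrated in Theorem~\ref{thm:main}. The one point meriting care is the sign-flip normalisation: one must confirm that it simultaneously preserves the anticoncentration probability $\rho(\mathbf{a})$ and the arithmetic non-degeneracy of $\mathbf{a}$, since it is precisely this step that lets us assume positivity and thereby upgrade the level sets $A_\alpha$ from mere codes to antichain codes.
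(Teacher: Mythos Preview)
Your proof is correct and follows essentially the same route as the paper: reduce to positive coefficients, observe that each level set $A_\alpha$ is both an antichain (by positivity) and a distance-$(2r+1)$ code (by the arithmetic hypothesis), and apply Theorem~\ref{thm:main}. The only cosmetic differences are that the paper performs the sign normalisation first rather than after the code verification, and justifies it via the equivalence of $0/1$-Bernoulli and $\pm1$-Rademacher formulations rather than your $\epsilon_k\mapsto 1-\epsilon_k$ bijection; your version has the minor virtue of explicitly checking that the hypothesis on $\mathbf{a}$ is preserved under sign flips, which the paper leaves implicit.
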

	Note in particular that the $r=1$ case is equivalent to saying $a_i\ne a_j$ for any $i\ne j$, so the result in this case reduces to the S\'ark\"ozy--Szemer\'edi theorem on the Erd\H{o}s--Moser problem. 
	
	Hal\'asz's theorem has since become a widely used tool in the study of random matrices and random polynomials. All known proofs of Hal\'asz's theorem use Fourier analysis, and are very much arithmetic in nature. While searching for an analogue of Hal\'asz's theorem for some other anticoncentration problems (in discussions with Berkowitz), it became clear that it would be of some help to find a purely combinatorial proof of this result, in the spirit of Erd\H{o}s' classical arguments. Arguably, the primary motivation for Theorem~\ref{thm:main} is that Hal\'asz's theorem is an easy corollary.
	\begin{proof}[Proof that Thereom~\ref{thm:main} implies Theorem~\ref{thm:Halazs}.]
		First, we we may assume without loss of generality that $a_i>0$ for all $i$; this follows from noting that $\rho(\mathbf{a})$ is unaltered if we replace the $0/1$-valued Bernoulli random variables in its definition with $-1/1$-valued Rademacher random variables. 
		
		Next, fix any real number $\alpha$ and let $A$ denote the family of subsets $x\subset [n]$ such that $\sum_{i\in x} a_i=\alpha$.  Note that $A$ is an antichain, as having $x\subset y$ both in $A$ would imply
		\[\alpha=\sum_{i\in x} a_i<\sum_{i\in y} a_i=\alpha,\]
		with the second inequality using $a_i>0$ for all $i$.  We also claim that $A$ is a distance-$(2r+1)$ code.  Indeed, for any distinct $x,y\in A$ we must have
		\[\sum_{i\in x\setminus y} a_i=\sum_{j\in y\setminus x} a_j;\] this implies, by the hypothesis on $\mathbf{a}$, that $|x\,\triangle \,y|=|x\setminus y|+|y\setminus x|>2r$, so $A$ is a distance-$(2r+1)$ code.  Thus, the bound of Theorem~\ref{thm:main} applies to $A$, and this is equivalent to saying that the probability that $\epsilon_1 a_1+\cdots+\epsilon_n a_n=\alpha$ is $O_r(n^{-r-1/2})$ for any $\alpha$; this gives the desired bound on $\rho(\mathbf{a})$.
	\end{proof}
	
	

	\section{Proof of the main result}
	We start with some brief comments on notation. We adopt the convention that lower case letters $a,b,c,x,y,z$ represent subsets of $[n]$, and capital letters $A,S$ represent families of sets, i.e., subsets of $2^{[n]}$. If $S\subset 2^{[n]}$ and $r\ge 1$ is an integer, we write $\partial^r S$ for the $r$-fold shadow of $S$, i.e., the collection of sets which can be obtained by deleting $r$ elements of $[n]$ from some set in $S$.  Similarly, we write $\partial^{-r} S$ for the collection of sets which can be obtained by adding $r$ elements to some set in $S$. For singletons we abuse notation by writing $\partial^{r} x$ for $\partial^{r} \{x\}$ and similarly $\partial^{-r}x$ for $\partial^{-r}\{x\}$.  Finally, we write $x\,\triangle\, y$ for the symmetric difference of $x$ and $y$. Throughout, we omit floors and ceilings when they are not crucial.
	
	Before we state and prove the main lemma that drives the proof of Theorem~\ref{thm:main}, we recall one proof of Sperner's theorem that serves as our inspiration. The local-LYM inequality (see~\cite{bbbook}) asserts that for any $S \subset \binom{[n]}{k}$, we have
	\[|\partial S| \binom{n}{k-1}^{-1} \ge |S| \binom{n}{k}^{-1}.\]
	It is not hard to show using local-LYM that any antichain $A \subset 2^{[n]}$ may be `shifted', by means of taking shadows, into the middle layer without decreasing the size of the resulting family, whence we conclude that $|A| \le \binom{n}{\lfloor n/2\rfloor}$. 
	
	A natural approach to proving Theorem~\ref{thm:main}, say for antichain codes of distance 3 to be concrete, is to proceed along similar lines as above, except using the distance condition instead of the local-LYM inequality to generate more `local expansion'. Concretely, given $A \subset 2^{[n]}$ that is both an antichain and a distance-3 code, it is easy to see for all $k$ that $|\partial A_k| \ge k |A_k|$, where $A_k = A \cap \binom{[n]}{k}$; in particular, for $k \approx n/2$, this tells us that $|\partial A_k| \gtrsim n|A_k|/2$, which is a significant improvement over the rather modest bound $|\partial A_k| \gtrsim |A_k|$ promised by local-LYM. It is then natural to attempt to transform a given antichain code $A$ of distance 3, by means of taking shadows, into a family that lives in the middle layer that is about $n$ times larger, which would then show that $|A| \lesssim \binom{n}{\lfloor n/2\rfloor}/n = O(2^n n^{-3/2})$, as desired. 
	
	To implement such an idea, we need to deal with how the shadows of the different $A_k$'s overlap as we repeatedly take shadows to move $A$ into the middle layer. For example, an estimate of the following form would be ideal: for any $S \subset \binom{[n]}{k}$ disjoint from $A_k$ (i.e., thinking of $S$ as the shadow of all the $A_\ell$'s with $\ell > k$ in layer $k$), we have $|\partial (S \cup A_k)| \ge |S| + k|A_k|/100$. Unfortunately, this is too much to hope for: if there are no conditions on the arbitrary set $S$, then it can be arranged so that $\partial S$ contains the entirety of $\partial A_k$.  Nevertheless, the following lemma shows that something like this ideal estimate does in fact hold when one studies the expansion of antichain codes over (slightly) longer ranges.
	
	\begin{lemma}\label{lem:shadow}
		Let $n,r\ge 1$ be integers with $n\ge 8r$.  If $\frac{n}{2}+3r\le k\le \frac{3n}{4}$, $S\subset\binom{[n]}{k}$,
		and $A\subset\binom{[n]}{k-r}\backslash\partial^{r} S$ is a distance-$(2r+1)$
		code, then $|\partial^{3r}S\cup\partial^{2r}A|\ge|S|+\frac{n^r|A|}{4(2r)^{3r}}$.
	\end{lemma}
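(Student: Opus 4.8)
The plan is to establish two separate lower bounds on $|\partial^{3r}S\cup\partial^{2r}A|$ and then interpolate between them. For the first bound, observe that $\partial^{3r}S\supset \partial^{2r}(\partial^r S)$, and since $A$ is disjoint from $\partial^r S$ by hypothesis, the sets $\partial^r S$ and $A$ are disjoint subsets of $\binom{[n]}{k-r}$. One then wants a Kruskal--Katona-type statement to the effect that disjointness of two families low in the hypercube (here, at layer $k-r$ with $k$ not much more than $n/2$) persists, in a quantitatively controlled way, after taking a further $2r$-fold shadow. Since the relevant layers all have size within a constant factor of $\binom{n}{n/2}$, local-LYM iterated $2r$ times already gives $|\partial^{2r} T|\ge (1-o(1))|T|$ for any $T$ in this range; what is needed is the slightly stronger assertion that $|\partial^{2r}(\partial^r S\cup A)|\ge |\partial^r S| + c\,|A|$ for an absolute constant $c>0$ whenever $A$ is disjoint from $\partial^r S$ \emph{and} $A$ itself has some expansion to spare. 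This is where the distance-code hypothesis on $A$ enters: because $A$ is a distance-$(2r+1)$ code inside $\binom{[n]}{k-r}$, its elements are pairwise far apart, so $|\partial^r A|\ge$ (something like $\binom{k-r}{r}/\binom{2r}{r}$ independent down-neighbours per element, giving) $\gtrsim n^r |A|/(2r)^{2r}$, and more importantly this shadow is ``spread out'' enough that a positive proportion of it survives past anything $\partial^r S$ can cover. The cleanest route is probably to prove directly that $|\partial^{2r}A \setminus \partial^{3r}S| \ge \frac{n^r |A|}{4(2r)^{3r}}$, which together with $|\partial^{3r}S|\ge |S|$ (again by iterated local-LYM in this layer range, where shadows do not shrink) yields the lemma immediately.

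So the heart of the matter reduces to: \emph{if $A\subset\binom{[n]}{k-r}$ is a distance-$(2r+1)$ code disjoint from $\partial^r S$, then $\partial^{2r}A$ meets the complement of $\partial^{3r}S$ in at least $\frac{n^r|A|}{4(2r)^{3r}}$ sets.} I would prove this by a weighted counting / double-counting argument. Assign to each pair $(a,b)$ with $a\in A$, $b\in\partial^{2r}a$ (so $b$ is obtained from $a$ by deleting $2r$ elements) a unit of charge. The total charge is $|A|\binom{k-r}{2r}$. On the other hand, group the charge by the target set $b$: a set $b\in\binom{[n]}{k-3r}$ receives charge equal to $|\{a\in A: b\subset a\}|\cdot\binom{k-r}{2r}/\binom{k-r}{2r}$—more precisely, the number of $a\in A$ with $a\supset b$. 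The key structural input is that $b$ cannot be covered by too many sets of $A$ \emph{when $b\notin\partial^{3r}S$}, and even globally the code condition limits multiplicities: if $a_1,a_2\in A$ both contain a common $b$ of size $k-3r$, then $|a_1\triangle a_2|\le 4r$... but the code has distance $>2r$, which is weaker than we'd like, so instead I would bound, for each $b$, the number of $a\in A$ with $|a\setminus b|=2r$ using a fresh application of the distance condition on the $2r$-subsets. The upshot should be that each surviving $b$ absorbs at most $O((2r)^{2r})$ units, so the number of surviving $b$ is at least $\frac{|A|\binom{k-r}{2r}}{O((2r)^{2r})} \cdot (\text{fraction not in }\partial^{3r}S)$. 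The fraction estimate is where disjointness from $\partial^r S$ is leveraged: $A\cap\partial^r S=\emptyset$ should force, via Kruskal--Katona applied to the \emph{upper} shadow, that $\partial^{2r}A$ cannot be mostly swallowed by $\partial^{3r}S=\partial^{2r}(\partial^r S)$.

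The main obstacle, and the step I expect to require the most care, is precisely this last point: controlling the overlap $\partial^{2r}A \cap \partial^{3r}S$ using only the hypothesis $A\cap\partial^r S=\emptyset$. Naively, even though $A$ and $\partial^r S$ are disjoint one layer up, their $2r$-fold shadows could in principle coincide almost entirely, which is exactly the obstruction flagged in the paragraph preceding the lemma. The resolution must use the code structure of $A$ together with the fact that we have room to take shadows over a range of length $3r$ rather than just $r$: I would argue that if a $\delta$-fraction of $\partial^{2r}A$ lay inside $\partial^{3r}S$, then pulling back along the code (using that distinct elements of $A$ have few common down-neighbours) would force $\partial^r A$ to have large intersection with $\partial^{2r}S$, and iterating this ``pull-back'' $r$ times—each time using one unit of the distance budget—would eventually contradict $A\cap\partial^r S=\emptyset$ outright. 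Getting the constants to land at $\frac{1}{4(2r)^{3r}}$ will just be bookkeeping once this mechanism is set up, and the constraints $n\ge 8r$ and $k\le 3n/4$ are there precisely to guarantee that all intermediate layers have comparable size so that the iterated local-LYM bounds $|\partial^{j} T|\ge(1-o(1))|T|$ hold throughout.
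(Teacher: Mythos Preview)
Your proposal correctly isolates the crux---showing that $\partial^{2r}A$ cannot be almost entirely absorbed by $\partial^{3r}S$---but the mechanism you offer for this, the ``pull-back'' argument, does not work, and in fact the intermediate inequality you are aiming for is false. The step you need is ``if most of $\partial^{j}A$ lies in $\partial^{j+r}S$ then most of $\partial^{j-1}A$ lies in $\partial^{j-1+r}S$,'' iterated down to $j=0$ where it would contradict $A\cap\partial^r S=\emptyset$. But the shadow operator is far from injective: $\partial T_1\subset\partial T_2$ says nothing about $T_1$ sitting inside $T_2$. Concretely, take $S$ to be a covering design with $\partial^{3r}S=\binom{[n]}{k-3r}$ but $\partial^r S\ne\binom{[n]}{k-r}$ (such $S$ exist), and let $A$ be any nonempty distance-$(2r+1)$ code in $\binom{[n]}{k-r}\setminus\partial^r S$. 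Then $\partial^{2r}A\setminus\partial^{3r}S=\emptyset$, so the inequality $|\partial^{2r}A\setminus\partial^{3r}S|\ge \frac{n^r|A|}{4(2r)^{3r}}$ you set out to prove directly is simply false. The lemma still holds in this example because $|\partial^{3r}S|$ is enormous, but this shows your decomposition $|\partial^{3r}S\cup\partial^{2r}A|=|\partial^{3r}S|+|\partial^{2r}A\setminus\partial^{3r}S|$ cannot be handled term by term.

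The paper's proof supplies the missing idea by (i) working under a contradiction hypothesis, so that $|\partial^{3r}S|-|S|$ is small, and (ii) taking one step \emph{upward} before going down. The disjointness $A\cap\partial^r S=\emptyset$ is rewritten as $\partial^{-r}A\cap S=\emptyset$, and it is in this form that it is used: a double count of pairs $(x,y)$ with $x\in\partial^{-r}A$ and $y\in\partial^{2r}x\cap\partial^{2r}S$ shows, using that $\partial^{-r}A$ and $S$ are disjoint at layer $k$ together with the smallness of $|\partial^{2r}S|-|S|$, that a random element of $\partial^{2r}(\partial^{-r}A)$ is unlikely to lie in $\partial^{2r}S$. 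On the other hand, the contradiction hypothesis easily gives that a random element of $\partial^r A$ is \emph{likely} to lie in $\partial^{2r}S$. Coupling these two random elements via a common $a\in A$ produces many distance-$2r$ pairs in layer $k-2r$ straddling the boundary of $\partial^{2r}S$, which forces $|\partial^{3r}S|-|\partial^{2r}S|$ to be large---the contradiction. Your sketch has no analogue of this upward move, and without it the hypothesis $A\cap\partial^r S=\emptyset$ cannot be transported to the lower layers where you need it.
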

	
	\begin{proof}
		Suppose for the sake of contradiction that $|\partial^{3r}S\cup\partial^{2r}A|<|S|+\frac{n^r|A|}{4(2r)^{3r}}$.
		Observe that because $k\ge\frac{n}{2}+3r$, local-LYM tells us that $|\partial B|\ge|B|$ for
		any family inside one of the layers $\binom{[n]}{k},\ldots,\binom{[n]}{k-3r+1}$.		
		Pick a uniformly random $a\in A$.  Let $b$ and $b'$ denote a uniformly random pair of disjoint $r$-subsets of $a$, and $c$ a uniformly random $r$-subset of $[n]\setminus a$. Note that since $A$ is a distance-$(2r+1)$ code, $\sd{a}{b}$ is a uniform random element of $\partial^{r}A$, and similarly $\sd{a}{c}$ is a uniform random element of $\partial^{-r}A$. Also observe that $a$ is uniquely determined by specifying either $\sd{a}{b}$ or $\sd{a}{c}$.
		
		\begin{claim}
			The random element $\sd{a}{b}$ of $\partial^r A$ satisfies $\Prob[\sd{a}{b}\notin\partial^{2r}S]< \frac{1}{4}.$\label{Nclaim:down}
		\end{claim}
		\begin{proof}
			By assumption,
			\[
			|S|+\frac{n^r|A|}{4(2r)^{3r}}>|\partial^{3r}S\cup\partial^{2r}A|\ge|\partial^{2r}S\cup\partial^{r} A|=|\partial^{2r}S|+|\partial^{r} A\backslash\partial^{2r}S|\ge|S|+|\partial^{r} A\backslash\partial^{2r}S|.
			\]
			Since $A$ is a distance-$(2r+2)$ code in $\binom{[n]}{k-r}$ and $k-r\ge n/2$, we have
			\[|\partial^{r} A|={k-r\choose r}|A|\ge \left(\frac{k-r}{r}\right)^{r} |A|\ge  \frac{n^{r}}{(2r)^{r}}|A|\ge \frac{n^{r}}{(2r)^{3r}}|A|.\]
			Combining this with the inequality above gives
			\[|\partial^{r} A\backslash\partial^{2r}S|<\frac{n^{r}|A|}{4(2r)^{3r}}\le \frac{1}{4}|\partial^{r} A|.\]
			This proves the claim since $\sd{a}{b}$ is a uniform random element of $\partial^{r} A$.
		\end{proof}

		\begin{claim}
			The random element $\sd{a}{(b\cup b'\cup c)}$ of $\partial^{2r}\partial^{-r}A$
			satisfies $\Prob[\sd{a}{(b\cup b'\cup c)}\in\partial^{2r}S]<\frac{1}{8}$.\label{Nclaim:up-down-down}
		\end{claim}
		
		\begin{proof}
			Let $P$ be the set of pairs $(x,y)\in\partial^{-r}A\times\partial^{2r}S$
			for which $y\in\partial^{2r}x$. There are a total of $\binom{n-k+2r}{2r}|\partial^{2r}S|$
			ways of picking an element $y\in\partial^{2r}S$ and an element $x\in\partial^{-2r}y$,
			and of these ways at least $\binom{k}{2r}|S|$ satisfy $x\in S$.
			Since $A$ is disjoint from $\partial^{r} S$, we have that $\partial^{-r}A$ is disjoint
			from $S$, so
			\[
			|P|\le\binom{n-k+2r}{2r}|\partial^{2r}S|-\binom{k}{2r}|S|\le\binom{n/2}{2r}\cdot (|\partial^{2r}S|-|S|)< \frac{n^{3r}|A|}{4 \cdot 32^{r}r^{3r}(2r)!},
			\]
			where the second inequality used $k\ge\frac{n}{2}+2r$ and $n/2\ge 4r$, and the last inequality used $\binom{n/2}{2r}\le (n/2)^{2r}/(2r)!$ and $|\partial^{2r}S|\le|\partial^{3r}S\cup\partial^{2r}A|<|S|+\frac{n^{r}|A|}{4(2r)^{3r}}$. Also, observe that $\sd{a}{c}$ is a uniform random element of $\partial^{-r}A$,
			so $\sd{a}{(b\cup b'\cup c)}\in \partial^{2r}S$ if and only if
			$(\sd{a}{c},\sd{a}{(b\cup b'\cup c)})\in P$. Note that $b,b',c$ are chosen
			uniformly at random out of \begin{equation}\binom{k-r}{r}\cdot \binom{k-2r}{r}\cdot \binom{n-k+r}{r}\ge \left(\frac{k-r}{r}\right)^{r}\cdot \left(\frac{k-2r}{r}\right)^{r}\cdot \left(\frac{n-k+r}{r}\right)^{r}\ge \frac{n^{3r}}{16^{r}r^{3r}}\label{eq:binomial}\end{equation}
			possibilities, with the last step using $k-2r\ge n/2$ and $k\le 3n/4$.   Since at most $(2r)!$ tuples $(a,b,b',c)$ correspond to
			the same pair $(\sd{a}{c},\sd{a}{(b\cup b'\cup c)})$ (as $\sd{a}{c}$ determines $a$ and $c$, the only
			noninjectivity comes from swapping elements between $b,b'$), we obtain
			\[
			\Prob[\sd{a}{(b\cup b'\cup c)}\in\partial^{2r}S]\le\frac{(2r)!|P|}{|A|\cdot n^{3r}/16^{r}r^{3r}}<\frac{1}{4\cdot 2^{r}}\le \frac{1}{8}. \qedhere
			\]
		\end{proof}
		We use these two claims to show that $\partial^{2r}S$ has large distance-$2r$ `edge expansion'
		in $\binom{[n]}{k-2r}$.
		\begin{claim}
			There are at least \[\frac{n^{3r}|A|}{2\cdot 16^{r}r^{3r}}\] pairs $(x,y)\in\binom{[n]}{k-2r}^{2}$
			at distance $2r$ with $x\in\partial^{2r}S$ and $y\not\in\partial^{2r}S$.
		\end{claim}
		
		\begin{proof}
			By the two previous claims, $\Prob[\sd{a}{b}\notin\partial^{2r}S]<\frac{1}{4}$
			and $\Prob[\sd{a}{(b\cup b'\cup c)}\in\partial^{2r}S]<\frac{1}{8}$.
			Thus, if we generate a random pair $(x,y)=(\sd{a}{b},\sd{a}{(b\cup b'\cup c)})$,
			then $x,y$ automatically have distance $2r$, and with probability at least $1-\frac{1}{4}-\frac{1}{8}\ge \frac{1}{2}$ they satisfy $x\in\partial^{2r}S$ and $y\not\in\partial^{2r}S$. The
			total number of potential pairs $(x,y)$ as above is 
			\[
			|\partial^{r} A|\cdot\binom{k-2r}{r}\binom{n-k-r}{r}={k-r\choose d}|A|\cdot \binom{k-2r}{r}\binom{n-k-r}{r}\ge \frac{n^{3r}}{16^{r}r^{3r}}|A|,
			\]
			with this last inequality using \eqref{eq:binomial}; we get the desired result by multiplying by $1/2$.
		\end{proof}
		We finish by enumerating in two ways the set $Q$ of pairs $(y,z)\in\binom{[n]}{k-2r}\times\binom{[n]}{k-3r}$
		where $z\in\partial^{3r}S$, $z\in\partial^{r} y$, and $y\not\in\partial^{2r}S$.
		On the one hand, any pair $(x,y)$ from the previous claim corresponds
		to such a pair $(y,z)\in Q$ by taking $z=x\cap y$, and this correspondence
		is at most $\binom{k-2r}{r}$-to-one. By the previous claim, 
		\[
		|Q|\ge\frac{n^{3r}|A|}{2\cdot 16^{r}r^{3r}\cdot \binom{k-2r}{r}}\ge\binom{n}{r}^{-1}\frac{n^{3r}|A|}{2\cdot 16^{r}r^{3r}}.
		\]
		On the other hand, the number of ways to pick $z$ is $|\partial^{3r}S|$,
		the number of ways to pick $y\in\partial^{-r}z$ is $\binom{n-k+3r}{r}$, and of
		these pairs at least $\binom{k-2r}{r}|\partial^{2r}S|$ satisfy $y\in\partial^{2r}S$.
		Thus,
		\begin{align*}
			|Q|&\le \binom{n-k+3r}{r}|\partial^{3r}S|-\binom{k-2r}{r}|\partial^{2r}S|<\binom{n/2}{r}(|\partial^{3r}S|-|\partial^{2r}S|)\\&\le\binom{n/2}{r}(|\partial^{3r}S\cup\partial^{2r}A|-|S|)\le \binom{n/2}{r}\frac{n^{r}|A|}{4 (2r)^{3r}}.
		\end{align*}
		To get a contradiction, it suffices to show
		\[\binom{n/2}{r}\binom{n}{r}< 2\cdot 2^{-r}n^{2r},\]
		which follows from $\binom{n/2}{r}\le (n/2)^{r}$ and $\binom{n}{r}\le n^{r}$.
	\end{proof}
	
	Our main result follows quickly from Lemma~\ref{lem:shadow}.
	
	\begin{proof}[Proof of Theorem~\ref{thm:main}]
		Recall that our goal is to prove that if $A\subset 2^{[n]}$ is both an antichain and a distance-$(2r+1)$ code for some $r\ge 1$, then $|A|=O_r\left(2^n n^{-r-1/2} \right)$.
		
		Let $A_k =A\cap \binom{[n]}{k}$ and let $S_k\subset \binom{[n]}{k}$ consist of the sets $x\subset [n]$ which are contained in some element of $A$.  Since $A$ is an antichain, we have $A_{k+2r}\subset \binom{[n]}{k+2r}\setminus \partial^r S_{k+3r}$.  Since $S_{k}\subset \partial^{3r}S_{k+3r}\cup \partial^{2r}A_{k+2r}$, by Lemma~\ref{lem:shadow} we find for all $n/2\le k\le 3n/4-3r$ that
		\[|S_{k}|\ge |S_{k+3r}|+\frac{n^r|A_{k+2r}|}{4(2r)^{3r}}.\]
		By applying this result inductively, we find for all $k\ge n/2$ that
		\begin{equation}|S_k|\ge \frac{n^r}{4(2r)^{3r}} \sum_{\ell \in L_k} |A_\ell|,\label{eq:inductive}\end{equation}
		where $L_k$ is the set of those $k\le \ell\le 3n/4-3r$ such that $ \ell\equiv k+2r\mod 3r$.
		
		By decreasing the size of $A$ by at most a factor of 2, we may assume $A_\ell=\emptyset$ for all $k<n/2$.  Similarly by decreasing $A$ by a factor of at most $3r$, we may assume there exists some $i$ such that $A_\ell=\emptyset$ for all $\ell\not \equiv i\mod 3r$.  By standard estimates for the binomial coefficients, we have that $\sum_{\ell\ge 3n/4-3r} {n\choose \ell}\le 1.9^{n}$ for $n$ sufficiently large.  Thus if $k\ge n/2$ is the smallest integer with $k\equiv i-2r\mod 3r$, these assumptions and \eqref{eq:inductive} imply that for $n$ sufficiently large, we have
		\[\frac{n^r}{4(2r)^{3r}} \left(|A|-1.9^n\right)\le |S_k|\le {n\choose k}=O(2^{n}n^{-1/2}).\]
		Rearranging this gives us the desired bound on $|A|$.
	\end{proof}

	\section*{Acknowledgements}
	The second author was supported by the NSF Mathematical Sciences Postdoctoral Research Fellowships Program under Grant DMS-2103154, the third author was supported by NSF grants CCF-1814409 and DMS-1800521, and the fourth author was supported by the NSF Mathematical Sciences Postdoctoral Research Fellowships Program under Grant DMS-2202730. We are grateful to Noga Alon and Ross Berkowitz for stimulating conversations. 
	
	\bibliographystyle{amsplain}
	\bibliography{sample}
	
\end{document}